\newcommand{\R}{\mathbb{R}}
\DeclareMathOperator*{\med}{median}
\DeclareMathOperator*{\midrange}{midrange}
\DeclareMathOperator*{\argmin}{arg\,min}
\theoremstyle{plain}
\newtheorem{lemma}{Lemma}[section]
\newtheorem{theorem}[lemma]{Theorem}
\newtheorem{corollary}[lemma]{Corollary}
\theoremstyle{definition}
\newtheorem{definition}[lemma]{Definition}
\begin{document}

\title[Formulas for homogeneous diffusion]{Statistical exponential formulas for homogeneous diffusion}

\author[M. Rudd]{Matthew Rudd}

\email{mbrudd@sewanee.edu}

\address{Sewanee: The University of the South \\
Sewanee, TN ~ 37383}

\date{\today}

\begin{abstract}
Let $\Delta^{1}_{p}$ denote the $1$-homogeneous $p$-Laplacian, for $1 \leq p \leq \infty$. 
This paper proves that the unique bounded, continuous viscosity solution $u$ of the Cauchy
problem 
\[
\left\{ \begin{array}{c}
u_{t} \ - \ ( \frac{p}{ \, N + p - 2 \, } ) \, \Delta^{1}_{p} u ~ = ~ 0 \quad \mbox{for} \quad x \in \R^{N} , \quad t > 0  \\
\\
u(\cdot,0) ~ = ~ u_{0} \in BUC( \R^{N} ) 
\end{array} \right.
\]
is given by the exponential formula
\[
u(t) ~ := ~ \lim_{n \to \infty}{ \left( M^{t/n}_{p} \right)^{n} u_{0} } \ ,
\]
where the statistical operator $M^{h}_{p} \colon BUC( \R^{N} ) \to BUC( \R^{N} )$ is defined by
\[
\left(M^{h}_{p} \varphi \right)(x) := 
(1-q) \med_{\partial B(x,\sqrt{2h})}{ \left\{ \, \varphi \, \right\} } + 
q \fint_{\partial B(x,\sqrt{2h})}{ \varphi \, ds } \ ,
\]
with $q := \frac{ N ( p - 1 ) }{ N + p - 2 }$, 
when $1 \leq p \leq 2$ and by
\[
\left(M^{h}_{p} \varphi \right)(x) := 
( 1 - q ) \midrange_{\partial B(x,\sqrt{2h})}{ \left\{ \, \varphi \, \right\} } + 
q \fint_{\partial B(x,\sqrt{2h})}{ \varphi \, ds  } \ ,
\]
with $q = \frac{ N }{ N + p - 2 }$, when $p \geq 2$. Possible extensions to problems with 
Dirichlet boundary conditions and to homogeneous diffusion on metric measure spaces are
mentioned briefly.
\end{abstract}

\maketitle

\section{Introduction} \label{intro}

\subsection{Linear diffusion and averaging}

Linear diffusion is the archetypal averaging process, and the most useful 
representation formulas for the solution of the heat equation convey its underlying statistics clearly.
The most famous such formula involves convolution with the heat kernel: 
given an admissible initial value $u_{0}$, the solution $u(x,t)$ of the Cauchy problem 
\begin{equation} \label{heat}
\left\{ \begin{array}{c}
u_{t} \, -  \, \Delta u ~ = ~ 0 \quad \mathrm{for} \quad x \in \R^{N} \quad \mbox{and} \quad \ t > 0 \, , \\
\\
u(x,0) ~ = ~ u_{0}(x) \quad \mbox{for} \quad x \in \R^{N} 
\end{array} \right.
\end{equation}
is simply given by
\begin{equation} \label{hk} 
u(x,t) ~ = ~ \int_{\R^{N}}{ P_{t}(x-y) u_{0}(y) \, dy } \, , 
\end{equation}
where, for $z \in \mathbb{R}^{N}$ and $t > 0$,  
\begin{equation} \label{hk2}
P_{t}(z) ~ := ~ \frac{1}{ \left( 4 \pi t \right)^{N/2} } \ \mbox{exp}\left( - \, \frac{ \, |z|^2 }{ \, 4t \, } \right) 
\end{equation}
is the Euclidean heat kernel. Formulas \eqref{hk} and \eqref{hk2} show that $u(x,t)$ is a weighted average of the initial 
data, with weights determined by a Gaussian with center $x$ and variance $2t$. The various properties of $u(x,t)$ when $t > 0$ 
follow directly from these formulas; it is clear, for example, that $u(x,t)$ is smooth
and that $u(x,t)$ is positive everywhere as long as $u_{0}$ is nonnegative and positive on a set of positive measure.

Heat kernel methods work beautifully in many different settings, and their development and deployment over 
the last thirty years have been particularly impressive (cf. \cite{grigoryan:hka09}, \cite{varopoulos:agg92}).  
Heat kernel techniques are fundamentally linear, however, and therefore do not apply to parabolic equations involving nonlinear operators. Semigroup methods, on the other hand, circumvent this dependence on linearity through their reliance 
on resolvents (\cite{crandall:gsn71},\cite{goldstein:slo85},\cite{pazy:slo83}). 
Following this approach, the solution of \eqref{heat} is given instead by the exponential formula
\begin{equation} \label{hille}
u(t) ~ = ~ \lim_{n \to \infty}{ \left( I - \frac{t}{n} \, \Delta \right)^{-n} u_{0} } \, .
\end{equation}
Despite the elegance of this formula and the theory behind it, \eqref{hille} does little to explain what actually 
happens to $u_{0}$ as it evolves according to \eqref{heat}; the infinite propagation speed mentioned above 
is certainly not obvious from \eqref{hille}, for example. Even worse, from the point of view of the present  
paper, is that verifying \eqref{hille} requires first analyzing the elliptic problems
\[
v \, - \, \lambda \, \Delta v ~ = ~ f \, , \quad \mbox{for given} \quad \lambda > 0 \quad \mbox{and} \quad f \, ,
\]
and obtaining careful estimates in appropriate spaces. Our philosophy is that we should
proceed the other way: we should first obtain a practical formula for the solution of a parabolic initial-value problem,
and we should then use that formula to gain insight into both the parabolic problem and the elliptic problems related to it.

Guided by this principle, this paper develops exponential formulas based not on resolvents, but on the local spatial 
statistics of the generator of the semigroup governing the evolution of the initial value $u_{0}$.  In the case of the heat 
equation, for instance, we can exploit the relationship given in Lemma \ref{taylor-mean} between the Laplacian and linear averaging, in which
we use the standard notation for integral average: for a measure $\mu$, a $\mu$-measurable set $E$, and a $\mu$-measurable
function $f$,
\[
\fint_{E}{ f \, d\mu } ~ := ~ \frac{1}{ \mu(E) } \, \int_{E}{ f \, d\mu } \ .
\]
We also adopt the usual notation $B(x,r)$ for an open ball with center $x$, radius $r \geq 0$, 
and spherical boundary $\partial B(x,r)$.

\begin{lemma} \label{taylor-mean}
For an open set $\Omega \subset \R^{N}$, $x \in \Omega$, and a smooth function 
$\varphi \colon \Omega \to \R$, 
\begin{equation} \label{taylor-mean-eqn}
\varphi(x) ~ - ~  \fint_{\partial B(x,\sqrt{2h})}{ \varphi(s) \, ds } ~ = ~ 
- \ \frac{h}{N} \, \Delta \varphi (x) ~ + ~ o( h ) \ . 
\end{equation}
\end{lemma}

This lemma follows directly from an elementary Taylor expansion, and identity \eqref{taylor-mean-eqn} is 
precisely the semigroup generation formula that we need. Combining it with a classical result like 
the Lax Equivalence Theorem \cite{lax:fa02} or Chernoff's 
Product Formula \cite{goldstein:slo85} yields 
an averaging representation for the solution $u$ of 
\begin{equation} \label{mod-heat}
\left\{ \begin{array}{c}
u_{t} \, -  \, \frac{1}{N} \, \Delta u ~ = ~ 0 \quad \mathrm{for} \quad x \in \R^{N} \quad \mbox{and} \quad t > 0 \, , \\
\\
u(x,0) ~ = ~ u_{0}(x) \quad \mbox{for} \quad x \in \R^{N} \ .
\end{array} \right.
\end{equation}
Specifically, let $BUC( \R^{N} )$ denote the space of bounded, uniformly continuous functions 
on $\R^{N}$ and, for a given $h > 0$, define the linear operator 
$M^{h}_{2} \colon BUC(\R^{N}) \to BUC(\R^{N})$ by
\[
\left( M^{h}_{2} \varphi \right)(x) ~ := ~ \fint_{\partial B(x,\sqrt{2h})}{ \varphi \, ds } \ , \quad \mbox{for} \quad 
x \in \R^{N} \ .
\]
We then have the following:
\begin{theorem} \label{chernoff-thm}
Let $u_{0} \in BUC( \R^{N} )$ be given. The function $u \colon [0, \infty) \to BUC( \R^{N} )$ 
defined by
\begin{equation} \label{chernoff}
u(t) ~ := ~ \lim_{n \to \infty}{ \left( M^{t/n}_{2} \right)^{n} u_{0} } 
\end{equation}
is the unique bounded, continuous solution of \eqref{mod-heat}.
\end{theorem}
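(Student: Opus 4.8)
The plan is to verify the hypotheses of Chernoff's Product Formula, which is the cleanest route given that Lemma~\ref{taylor-mean} already hands us the infinitesimal generator identity. Recall that Chernoff's theorem asserts that if $\{F(h)\}_{h \geq 0}$ is a family of contractions on a Banach space $X$ with $F(0) = I$, and if $\frac{1}{h}(F(h)\varphi - \varphi)$ converges as $h \to 0^{+}$ to $A\varphi$ for $\varphi$ in a suitable dense set on which $A$ is the generator of a strongly continuous contraction semigroup, then $\lim_{n \to \infty} F(t/n)^{n} \varphi = e^{tA}\varphi$ uniformly on compact time intervals. Here $X = BUC(\R^{N})$, $F(h) = M^{h}_{2}$, and the candidate generator is $A = \frac{1}{N}\Delta$.

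First I would check the elementary structural properties of $M^{h}_{2}$. It maps $BUC(\R^{N})$ into itself: boundedness is immediate since a spherical average is bounded by $\|\varphi\|_{\infty}$, and uniform continuity is inherited because the averaging operator commutes with translations and is a contraction. That contraction property, $\|M^{h}_{2}\varphi\|_{\infty} \leq \|\varphi\|_{\infty}$, follows directly from the averaging being a probability measure on the sphere, and one also reads off $M^{0}_{2} = I$ by interpreting the sphere of radius $0$ as the point $x$. These are the ``static'' hypotheses of Chernoff.

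Next I would address the generator condition. For $\varphi$ smooth with bounded derivatives, Lemma~\ref{taylor-mean} gives exactly
\[
\frac{1}{h}\left( M^{h}_{2}\varphi - \varphi \right)(x) ~ = ~ \frac{1}{N}\Delta\varphi(x) ~ + ~ o(1) \quad \text{as} \quad h \to 0^{+} ,
\]
so the difference quotients converge pointwise to $\frac{1}{N}\Delta\varphi$. The work here is to upgrade this to convergence \emph{in the $BUC$ norm} on a dense subspace; I would take the test class to be, say, $C^{\infty}_{c}(\R^{N})$ or the Schwartz functions, where the $o(h)$ remainder in the Taylor expansion can be controlled uniformly in $x$ by a bound on the third derivatives, making the convergence uniform. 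I would also need to know that $\frac{1}{N}\Delta$, suitably closed, generates a strongly continuous contraction semigroup on $BUC(\R^{N})$ and that this test class is a core; this is classical for the heat semigroup, whose solution is given by convolution with the rescaled Gaussian heat kernel from \eqref{hk2}, and it is precisely the bounded continuous solution of \eqref{mod-heat}.

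The main obstacle I anticipate is the uniformity of the generator limit together with the core/density bookkeeping, rather than any single hard estimate. The pointwise Taylor expansion is routine, but Chernoff's formula demands norm convergence of the difference quotients on a dense set that is a core for the generator, and one must be careful that $BUC(\R^{N})$ is the right space in which the heat semigroup is strongly continuous (it is, unlike $L^{\infty}$). Once these pieces are assembled, Chernoff's Product Formula yields $\lim_{n \to \infty}(M^{t/n}_{2})^{n}u_{0} = e^{t\Delta/N}u_{0}$, and identifying this limit with the unique bounded continuous solution of \eqref{mod-heat}---via the heat kernel representation and a standard uniqueness argument for bounded viscosity (equivalently classical) solutions---completes the proof.
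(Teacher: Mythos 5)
Your strategy---checking the hypotheses of Chernoff's Product Formula, with the generator identity supplied by Lemma~\ref{taylor-mean}---is precisely the paper's route: the paper proves this theorem by combining Lemma~\ref{taylor-mean} with the classical Chernoff/Lax machinery from \cite{goldstein:slo85} and \cite{lax:fa02}, and supplies essentially no more detail than that. Most of your verification (contractivity, $M^{0}_{2}=I$, preservation of $BUC(\R^{N})$, and the observation that the heat semigroup is strongly continuous on $BUC(\R^{N})$ but not on $L^{\infty}$) is correct.

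There is, however, one step that fails as written: your proposed test classes are not dense in $BUC(\R^{N})$. The closure of $C^{\infty}_{c}(\R^{N})$ in the uniform norm---and likewise the closure of the Schwartz class---is $C_{0}(\R^{N})$, the continuous functions vanishing at infinity, which is a proper closed subspace of $BUC(\R^{N})$ (it contains no nonzero constant, nor a function like $\sin(x_{1})$). Since Chernoff's theorem requires the difference quotients to converge on a subspace that is both dense in the ambient Banach space and a core for the generator, your choice would only yield the exponential formula for $u_{0}\in C_{0}(\R^{N})$, not for all $u_{0}\in BUC(\R^{N})$. The repair is routine: take the test class $D$ to consist of smooth functions all of whose derivatives lie in $BUC(\R^{N})$, obtained for instance as mollifications $\rho_{\varepsilon}\ast f$ with $f\in BUC(\R^{N})$; these converge to $f$ uniformly precisely because $f$ is uniformly continuous, so $D$ is dense in $BUC(\R^{N})$, and $D$ is invariant under the Gauss--Weierstrass semigroup, hence a core for its generator. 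On $D$ the third derivatives are uniformly bounded, so the remainder in Lemma~\ref{taylor-mean} is $O(h^{3/2})$ uniformly in $x$ and the difference quotients converge in the $BUC$ norm, exactly as your argument requires. With that single substitution your proof is complete and coincides with the paper's.
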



As with the heat kernel representation \eqref{hk}, we can easily deduce the properties of the solution of
\eqref{mod-heat} from formula \eqref{chernoff}. The regularity of $u$ follows easily from \eqref{chernoff}, for example, thanks 
to the well-known smoothing effect of linear averaging. The infinite speed of propagation for the heat equation also 
follows easily: if $u_{0}$ is nonnegative and compactly supported, then the support of $u_{0}$ will expand a distance $\sqrt{ 2t/n }$
in all directions after an application of $M^{t/n}_{2}$, resulting in an expansion of $\sqrt{2nt}$ in all directions after $n$
iterations. Letting $n \to \infty$, it is clear that $u(x,t)$ will be supported on all of $\R^{N}$ for any $t > 0$. 
For future reference, note that this same calculation establishes an infinite speed of propagation for the
Cauchy problems considered in Section \ref{homo}.

When $N = 1$, the operator $M^{h}_{2}$ is particularly simple, since we then have
\begin{equation} \label{1d-avg}
\left( M^{h}_{2} \varphi \right)(x) ~ = ~ \frac{ \ \varphi( x - \sqrt{2h} ) + \varphi( x + \sqrt{2h} \ ) }{ 2 } \ \ .
\end{equation}
In this case, formula \eqref{chernoff} is a fully discrete forward Euler scheme for
the heat equation in one space dimension. Well-known arguments related to simple random walks 
on the line \cite{lawler:rwh10} then lead directly from \eqref{chernoff} to 
the heat kernel representation \eqref{hk}. In higher dimensions, the averaging operator $M^{h}_{2}$ 
no longer acts on discrete sets, but we can still combine the obvious discrete approximation 
of the average over a sphere with standard random walk techniques to derive \eqref{hk} from \eqref{chernoff}.

The exponential formula \eqref{chernoff} is thus clearly related to finite difference methods for the
heat equation, but it does not seem to have been used as a purely theoretical tool, despite its correspondence
with our common intuition about linear diffusion.  Furthermore, since our interest in \eqref{chernoff} is not 
driven by numerics, the computational limitations of forward Euler schemes are irrelevant here.  In fact, we
contend that formula \eqref{chernoff} is superior to other representations of the solution of \eqref{mod-heat}; 
formula \eqref{chernoff} exposes the averaging inherent in linear diffusion, permits an elementary analysis of solutions 
of \eqref{mod-heat}, can be modified easily to accommodate Dirichlet boundary conditions (see Section \ref{future} below), 
leads naturally to the mean value property of harmonic functions, 
and can be used to derive the heat kernel formula \eqref{hk}. 
Even more significant is the fact that straightforward variations of formula \eqref{chernoff} provide 
representations of the solutions of statistically-driven nonlinear diffusion equations. Substantiating this last 
claim is the goal of this paper.



\subsection{Overview of paper and background results}

As intimated above, this paper develops exponential formulas for the continuous viscosity solutions of Cauchy problems 
of the general form
\begin{equation} \label{gen}
\left\{ \begin{array}{c} 
\displaystyle{ u_{t} \, + \, F(Du, D^{2} u) ~ = ~ 0 \quad \mathrm{for} \quad x \in \R^{N} \quad \mbox{and} \quad t > 0 \, , } \\
\\
\displaystyle{ u(x,0) ~ = ~ u_{0}(x) \quad \mbox{for} \quad x \in \R^{N} \ , }
\end{array} \right.
\end{equation}
where $u_{0} \in BUC(\R^{N})$, $Du$ denotes the spatial gradient of the real-valued function $u$, $D^2 u$ denotes its spatial Hessian, and 
$F = F(p,X)$ satisfies the conditions listed below; as usual, $S^{N}$ denotes the space of $N \times N$ real symmetric matrices with its 
standard partial ordering.
\smallskip
\begin{enumerate}[(i)]
\item
$F \, \colon \, \R^{N} \setminus \{ 0 \} \times S^{N} \to \R$ \ is continuous; excluding the case $p = 0$ 
accommodates singular gradient dependence in \eqref{gen}.
\smallskip
\item 
$F$ is $1$-homogeneous:
given $p \in \R^{N} \setminus \{ 0 \}$, $X \in S^{N}$, and $\alpha \in \R$,
\[
F( \alpha p, \alpha X ) ~ = ~ \alpha F( p, X ) .
\]
\item
$F$ is degenerate elliptic: 
for a given $p \in \R^{N} \setminus \{ 0 \}$, 
\[
F( p, X ) \leq F( p, Y ) \quad \mbox{when} \quad Y \leq X .
\]
\end{enumerate}

Since representation formulas for solutions are the focus of this paper, we presume a 
familiarity with viscosity solutions and refer to other works
for definitions, existence proofs, comparison principles, and so on. The Users' 
Guide \cite{crandall:ugv92} and the lecture notes \cite{bardi:vsa97} are basic references, 
several papers by Juutinen, Kawohl and their coauthors
(\cite{akagi:euv08}, \cite{julin:npe12}, \cite{juutinen:evs01}, \cite{juutinen:egi06}, \cite{kawohl:cpl00}, \cite{kawohl:cpv07}) 
address important issues related to comparison principles and the definitions of solutions, and 
the fundamental paper \cite{giga:cpc91} by Giga, Goto, Ishii 
and Sato is an essential reference (see also \cite{giga:see06}), as it contains the 
comparison principle on which our uniqueness statements depend. Also, the results that follow 
all depend on the modern viscosity version of Chernoff's Product Formula developed by Barles and 
Souganidis in \cite{barles:cas91}.

To summarize what follows, Section \ref{mc} shows that replacing the linear average in \eqref{chernoff} with the median yields
an exponential formula for motion by mean curvature; as discussed there, the median is 
a nonlinear average that must be handled a bit carefully. It is easier to work with the midrange, which averages the infimum and
supremum of a function over a set, and Section \ref{midrange} proves that iterating this particular nonlinear average
provides a formula for the solution of the Cauchy problem for the parabolic infinity-Laplacian. Section \ref{homo} 
combines the results from Sections \ref{intro}, \ref{mc} and \ref{midrange} to establish exponential
formulas for the Cauchy problems involving the parabolic $1$-homogeneous $p$-Laplacian for $1 \leq p \leq \infty$. (The $1$-homogeneous $p$-Laplacian is 
also known as the normalized or game-theoretic $p$-Laplacian.) Finally, Section \ref{future} 
speculates on possible generalizations of our results to parabolic problems with Dirichlet boundary conditions 
and to homogeneous diffusion on metric measure spaces. 

All of the exponential formulas proven below are the same when $N=1$, since the averaging operators used all reduce 
to \eqref{1d-avg} in that case. We therefore assume henceforth that $N \geq 2$. 

\section{Medians and mean curvature flow in $\R^{N}$} \label{mc}

This section establishes a statistical exponential formula for the solution $u$ of the 
level set formulation of mean curvature flow, 
\begin{equation} \label{mc-flow}
\left\{ \begin{array}{c} 
\displaystyle{ u_{t} \ - \ |Du| \ \mbox{div}\left( \frac{Du}{ \, | Du | \, } \right)  ~ = ~ 0 , 
\quad \mbox{for} \quad x \in \R^{N}, \ \ t > 0 , } \\
\\
\displaystyle{ u(x,0) ~ = ~ u_{0}(x) , \quad \mbox{for} \quad x \in \R^{N} \ . }
\end{array} \right.
\end{equation}
This well-known front propagation model was introduced by Osher and Sethian \cite{osher:fpc88} and has since been 
studied extensively by many authors. Our exponential formula for its solution is a simple nonlinear analogue of 
the averaging representation \eqref{chernoff} for the solution $u$ of the heat equation \eqref{mod-heat}; to provide a context for 
the formula, we briefly review some particularly relevant earlier work.

Since \eqref{mc-flow} appeared in \cite{osher:fpc88}, developing efficient and provably convergent algorithms for 
approximating its solution has been a fundamental problem. In \cite{merriman:mmf94}, 
Bence, Merriman and Osher proposed their famous algorithm for mean curvature motion; during each iteration, their algorithm 
solves the heat equation with appropriate initial data and then thresholds the resulting solution. Their algorithm's  
performance motivated much work on its convergence properties, resulting in the basic references \cite{barles:spc95}, 
\cite{evans:cam93}, and \cite{ishii:tdt99}. 
Of these three papers, the work by Evans \cite{evans:cam93} is perhaps most closely related to the present paper, 
as it applies the resolvent-based nonlinear semigroup machinery of Crandall and Liggett \cite{crandall:gsn71} to prove 
convergence of the Bence-Merriman-Osher scheme. 

Shortly thereafter, Catt\'{e}, Dibos and Koepfler \cite{catte:msm95} established a different
Crandall-Liggett exponential formula for the solution of \eqref{mc-flow} when $N=2$, basing their development on an axiomatic 
approach to image processing \cite{alvarez:afe93}. Specifically, Catt\'{e} et al. proved that the solution $u$ of \eqref{mc-flow} 
has the representation
\begin{equation} \label{catte}
u(t) ~ = ~ \lim_{n \to \infty}{ C_{t/n}^{n} u_{0} } \, ,
\end{equation}
where 
\[
C_{h} \varphi ~ := ~ \frac{1}{2} \left\{ S_{2h} \varphi + I_{2h} \varphi \right\} \, , 
\]
\[
\left( I_{h} \varphi \right)(x) ~ := ~ \inf_{\theta \in [0,\pi)}{ \sup_{x + \sigma(\theta,h)}{ \left\{ \, \varphi(y) \, \right\} } } \, ,
\]
\[
\left( S_{h} \varphi \right)(x) ~ := ~ \sup_{\theta \in [0,\pi)}{ \inf_{x + \sigma(\theta,h)}{ \left\{ \, \varphi(y) \, \right\} } } \, ,
\]
and $\sigma(\theta,h)$ denotes a segment centered at $0$ with direction $\theta$ and length $2 \sqrt{2h}$.

Although \cite{catte:msm95} appeared roughly twenty years ago, it has become better known through a more
recent work by Kohn and Serfaty \cite{kohn:dcb06} that produced a variant of formula \eqref{catte} from a
very different perspective. Kohn and Serfaty arrived at their version of \eqref{catte} by way of the dynamic programming
principle for a simple deterministic two-player game in $\R^{2}$ that we review briefly. 
During each round of the game, the first player chooses a direction 
$v \in S^{1}$ that the second player either accepts or reverses, thereby determining the
direction $w = \pm v$; the game position then moves 
a distance $\sqrt{2h}$ in the direction $w$. When the game ends,  
the first player pays the second player the amount $u_{0}( x_{T} )$, where $x_{T}$ is the game position when
the game ends at time $T$. 
Since the first player's value function $u_{h}$ corresponds to playing optimally, $u_{h}$ satisfies the 
dynamic programming principle
\begin{equation} \label{dpp}
u_{h}( x, kh) ~ = ~ \min_{ \mathbf{v} \in S^{1} }{ \max_{ b = \pm 1 }{ \left\{ \, u_{h}( x + \sqrt{2h} bv, (k+1) h ) \, \right\} } } , 
\end{equation}
with $u_{h}(x,T) = u_{0}(x)$. Using \eqref{dpp}, Kohn and Serfaty proved that the value functions 
$u_{h}$ converge, as $h \to 0$, to the solution of \eqref{mc-flow} (after a simple change of variables to go from
this terminal-value game to the initial-value formulation above). Equation \eqref{dpp} 
and the results of \cite{kohn:dcb06} therefore show that we can approximate the mean curvature motion of a curve, over a small
time step $h$, by tracking the midpoint of a segment of length $2\sqrt{2h}$ as its endpoints traverse the curve; 
iterating this procedure yields an exponential formula for the solution of \eqref{mc-flow}.


Ruuth and Merriman presented a similar description of mean curvature flow, from yet another viewpoint, 
in \cite{ruuth:cgm00}, an interesting paper that does not seem to have received sufficient attention. 
They proved that, for a given closed curve $\gamma$ in the plane, one can approximate the mean curvature flow of
$\gamma$ for a small time step $h$ by tracking the center of a circle of radius $\sqrt{2h}$ as it traverses $\gamma$
in such a way that exactly half of its area is always inside $\gamma$. Note that the approximation scheme corresponding 
to \eqref{dpp}, on the other hand, tracks the center of a circle as it traverses $\gamma$ in such a way that half of 
its circumference is always inside $\gamma$. 

While the paper by Kohn and Serfaty was making the rounds as a preprint, Oberman published his paper 
on provably convergent median schemes for mean curvature flow \cite{oberman:cmd04}. In it, he presented a 
forward Euler method for approximating the solution of \eqref{mc-flow} on a rectangular grid; at each iteration, the 
algorithm updates each grid value with the median of its neighboring grid values, with neighbors defined carefully so as 
to reduce errors from poor angular resolution. In particular, using a small grid spacing and a wide computational stencil 
enables the selection of neighbors that approximate a circle centered at the grid point of interest.

All of these papers have suggested connections between mean curvature flow and appropriately interpreted median operators, with 
Oberman's work doing so explicitly in a discrete context. To develop these connections further, we summarize the properties of medians
of measurable and continuous functions, beginning with the definition and proceeding to some 
recent results that should be of independent interest. 

\begin{definition}[\cite{ziemer:wdf89}]
If $u \colon E \to \R$ is measurable and $0< |E| < \infty$, then $m$ is a \textit{median} of $u$ over $E$ if and only if
\[
| \,  \{ \, u < m \} \, | ~ \leq ~ \frac{1}{2} \, | \, E \, | \quad \mbox{and} \quad 
| \,  \{ \, u > m \} \, | ~ \leq ~ \frac{1}{2} \, | \, E \, |  \ .
\]
\end{definition}

We denote the set of all medians of $u$ over $E$ by $\displaystyle{ \med_{E}{ \{ u \} } } $. 
As discussed in \cite{ziemer:wdf89}, $\displaystyle{ \med_{E}{ \{ u \} } }$ is a non-empty compact interval, and
\begin{equation} \label{homo-trans}
\med_{E}{ \left\{ \alpha u + \beta \right\} } ~ = ~ \alpha \med_{E}{ \{ u \} } + \beta 
\end{equation}
for any constants $\alpha$ and $\beta$. We will exploit these homogeneity and translation invariance properties below, 
as well as the obvious stability of medians: if $u \in L^{\infty}(E)$, then 
$m \leq \| u \|_{\infty}$ for any $\displaystyle{ m \in \med_{E}{ \{ u \} } } $. 

If $u$ is merely measurable, the set of medians of $u$ over $E$ can clearly have positive Lebesgue measure. 
Continuous functions, on the other hand, have unique medians over compact connected sets, 
as shown by the following results from \cite{hartenstine:sfe13}.
These facts pave the way for further analysis and explain our restriction to continuous initial data throughout this paper. 
In the statement of Lemma \ref{fundamental}, $LSC(E)$, $USC(E)$ and $C(E)$ denote, respectively, the lower 
semicontinuous, upper semicontinuous, and continuous real-valued functions on the measurable set $E$. 

\begin{lemma}[\cite{hartenstine:sfe13}] \label{fundamental}
Suppose that $L \in LSC(E)$, $U \in USC(E)$, $L \geq U$, and that $E \subset \R^{k}$ is compact and connected.
If $\displaystyle{ m \in \med_{E}{L} } $ and $\displaystyle{M \in \med_{E}{U} }$, then $m \geq M$.
\end{lemma}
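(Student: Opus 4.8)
The plan is to argue by contradiction, assuming $m < M$ and exhibiting a partition of $E$ into two disjoint, nonempty, relatively closed sets, which is impossible for a connected set. The two pieces will be a sublevel set of $L$ and a superlevel set of $U$, cut at two nearby levels chosen from the open interval $(m,M)$.

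First I would fix levels $c_1, c_2$ with $m < c_1 < c_2 < M$ and set $A := \{\, x \in E : L(x) \le c_1 \,\}$ and $B := \{\, x \in E : U(x) \ge c_2 \,\}$. Since $L \in LSC(E)$, the set $\{L > c_1\}$ is relatively open and $A$ is relatively closed; since $U \in USC(E)$, the set $\{U < c_2\}$ is relatively open and $B$ is relatively closed. Next I would extract the measure estimates forced by the median conditions. Because $c_1 > m$ we have $\{L > c_1\} \subseteq \{L > m\}$, so $|A| = |E| - |\{L > c_1\}| \ge |E| - |\{L > m\}| \ge \tfrac12 |E| > 0$; symmetrically, $c_2 < M$ gives $\{U < c_2\} \subseteq \{U < M\}$, whence $|B| \ge \tfrac12 |E| > 0$. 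In particular both $A$ and $B$ are nonempty. Moreover $A$ and $B$ are disjoint: if $x$ lay in both, then the hypothesis $L \ge U$ would give $L(x) \le c_1 < c_2 \le U(x) \le L(x)$, a contradiction.

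The main obstacle is that $A$ and $B$ need not cover $E$ on the nose, so I cannot invoke connectedness immediately. Here the measure estimates save the day: $A$ and $B$ are disjoint subsets of $E$ with $|A| + |B| \ge |E|$, so in fact $|A| + |B| = |E|$ and the leftover set $G := E \setminus (A \cup B) = \{L > c_1\} \cap \{U < c_2\}$ has measure zero. The delicate point is to upgrade $|G| = 0$ to $G = \emptyset$: since $G$ is the intersection of two relatively open sets it is itself relatively open, and the measure in play charges every nonempty relatively open subset of $E$ (as surface measure does on the spheres to which the lemma will be applied), so $|G| = 0$ forces $G = \emptyset$. Consequently $E = A \sqcup B$ is a separation of $E$ into two nonempty, relatively closed sets, contradicting connectedness. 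Hence $m < M$ is impossible and $m \ge M$, as claimed.
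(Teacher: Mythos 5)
This lemma is stated in the paper without proof --- it is imported from \cite{hartenstine:sfe13} --- so there is no in-paper argument to compare yours against; I can only judge the proposal on its own terms. On those terms it is correct in the setting where the paper actually uses the lemma, and it is the natural argument: the two-level trick of cutting at $c_{1} < c_{2}$ inside $(m,M)$ is exactly what is needed, since with a single cut level $c$ the relatively closed sets $\{L \le c\}$ and $\{U \ge c\}$ can meet along $\{L = U = c\}$ and no separation results. Your semicontinuity, measure-counting, and connectedness steps are all accurate.

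The point you call delicate deserves emphasis, because it is a genuine hypothesis and not a technicality one can argue away: the statement is \emph{false} for Lebesgue measure on an arbitrary compact connected $E \subset \R^{k}$. Take $E \subset \R^{2}$ to be the two closed unit disks centered at $(\pm 2, 0)$ joined by the segment $[-1,1] \times \{0\}$, and let $L = U = f$ with $f(x,y) = \max\left(0, \min\left(1, \tfrac{x+1}{2}\right)\right)$, so that $f \equiv 0$ on the left disk and $f \equiv 1$ on the right disk. Then $f$ is continuous and $E$ is compact and connected, yet both $0$ and $1$ are medians of $f$ over $E$, because the joining segment is Lebesgue-null; the conclusion $m \ge M$ fails. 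So some assumption forcing every nonempty relatively open subset of $E$ to have positive measure --- automatic for surface measure on the spheres $\partial B(x,\sqrt{2h})$ to which the lemma is applied in this paper, or for Lebesgue measure on closures of connected open sets --- is necessary, and any correct proof must invoke it exactly where you do, to upgrade $|G|=0$ to $G = \emptyset$. Your write-up is right to make the assumption explicit; the one improvement I would ask for is to state it as a standing hypothesis on the measure rather than as a parenthetical, since you are proving the lemma in the intended full-support setting and not in the literal generality of the quoted statement.
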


\begin{corollary}[\cite{hartenstine:sfe13}] \label{uni-mono}
Suppose that $E \subset \R^{k}$ is compact and connected.
If $v \in C(E)$, then the median of $v$ over $E$ is unique. Moreover, if 
$u \in C(E)$ satisfies $u \geq v$, then $ \displaystyle{ \med_{E}{u} \geq \med_{E}{v} } $.
\end{corollary}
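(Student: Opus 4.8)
The plan is to derive both assertions directly from Lemma \ref{fundamental}, exploiting the single observation that a continuous function belongs simultaneously to $LSC(E)$ and to $USC(E)$. This dual membership is what lets us insert a continuous function into either the $L$ slot or the $U$ slot of the lemma, and both parts of the corollary are obtained by making the right choice.

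For the uniqueness claim, I would apply Lemma \ref{fundamental} to the single function $v$ by taking $L = v$ and $U = v$. Continuity gives $v \in LSC(E)$ and $v \in USC(E)$, and the inequality $L \geq U$ holds trivially since $L$ and $U$ coincide. The lemma then asserts that whenever $m \in \med_{E}{v}$ and $M \in \med_{E}{v}$ we have $m \geq M$. Because the same function $v$ occupies both roles, this conclusion holds for an arbitrary ordered pair of medians: applying it once with $m = a$, $M = b$ and once with $m = b$, $M = a$ yields $a \geq b$ and $b \geq a$, hence $a = b$. Thus the compact interval $\med_{E}{v}$ collapses to a single point.

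For the monotonicity claim, given $u, v \in C(E)$ with $u \geq v$, I would instead take $L = u$ and $U = v$ in Lemma \ref{fundamental}. Continuity supplies $u \in LSC(E)$ and $v \in USC(E)$, and $u \geq v$ is exactly the standing hypothesis $L \geq U$. For $m \in \med_{E}{u}$ and $M \in \med_{E}{v}$ the lemma gives $m \geq M$, and the uniqueness just established lets us record this as the unambiguous inequality $\med_{E}{u} \geq \med_{E}{v}$.

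The corollary is in essence a repackaging of Lemma \ref{fundamental}, so there is no substantial obstacle; the only step requiring care is the antisymmetry argument in the uniqueness proof. One must verify that the lemma may be invoked with the two medians in either order, and this is legitimate precisely because the same continuous function is playing both the lower-semicontinuous and the upper-semicontinuous role.
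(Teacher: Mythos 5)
Your proof is correct and is exactly the intended argument: the paper states this result as a corollary of Lemma \ref{fundamental} (both quoted from \cite{hartenstine:sfe13}), and the derivation is precisely your two applications of that lemma --- taking $L = U = v$ and using antisymmetry for uniqueness, then $L = u$, $U = v$ for monotonicity. Nothing is missing; the symmetric double application for uniqueness is the one step needing care, and you handled it correctly.
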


Lemma \ref{taylor-mean} is the essential ingredient in the proof of the exponential formula
\eqref{chernoff} for the solution of \eqref{mod-heat}; it shows that repeated linear averaging provides 
a consistent approximation scheme for linear diffusion. The next result shows that the median, a nonlinear average,   
plays a corresponding role for mean curvature flow.  It is new in dimensions $N > 2$, having been established
in \cite{hartenstine:asc11} for functions of two variables. Henceforth, it will be 
convenient to let $\Delta_{1}$ denote the $1$-Laplacian operator, the elliptic part of 
equation \eqref{mc-flow}; $\Delta_{1}$ is thus defined formally by
\begin{equation} \label{1-Laplacian} 
\Delta_{1} \varphi ~ := ~ |D\varphi| \ \mbox{div}\left( \frac{D\varphi}{ \, | D\varphi | \, } \right) \ .
\end{equation}
It will also be useful below to recall that, when 
$D \varphi(x) \neq 0$, 
\begin{equation} \label{1Lap-trace}
\Delta_{1}\varphi(x) ~ = ~ \mbox{tr}\left( \left. D^{2}\varphi(x) \right\vert_{\Sigma} \, \right)  \ , 
\end{equation}
where $\Sigma$ is the plane orthogonal to $D \varphi(x)$; this description of the $1$-Laplacian 
plays a key role in the related papers \cite{kawohl:vpl11} and \cite{kawohl:snp12}.

\begin{lemma} \label{taylor-median}
Let $\Omega \subset \R^{N}$ be open, $N \geq 2$. For $x \in \Omega$ and a smooth function 
$\varphi \colon \Omega \to \R$ with $|D \varphi(x)| \neq 0$, 
\begin{equation} \label{1LapSpheres}
\varphi(x) ~ - ~ \med_{\partial B(x,\sqrt{2h})}{ \left\{ \, \varphi \, \right\} } ~ = ~ 
- \ \frac{ h }{ N-1 } \ \Delta_{1} \varphi(x) ~ + ~ 
o( h ) \ .
\end{equation}

\end{lemma}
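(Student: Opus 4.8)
The plan is to mirror the proof of Lemma~\ref{taylor-mean}, replacing the explicit spherical average by a perturbative analysis of the median. Set $r := \sqrt{2h}$ and parametrize the sphere by $\omega \in S^{N-1}$. First I would use the translation invariance \eqref{homo-trans} to reduce the claim to an estimate for the median of the centered function $g(\omega) := \varphi(x + r\omega) - \varphi(x)$, since the change of variables $y = x + r\omega$ gives $\med_{\partial B(x,r)}\{\varphi\} = \varphi(x) + \med_{S^{N-1}}\{g\}$. A second-order Taylor expansion, with remainder uniform on the compact sphere, then yields
\[
g(\omega) ~ = ~ r\, D\varphi(x)\cdot\omega ~ + ~ \tfrac{r^2}{2}\,\omega^{\top} D^2\varphi(x)\,\omega ~ + ~ o(r^2)
\]
uniformly in $\omega$.

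Next I would strip off this uniform remainder. For each $\eps > 0$ and all sufficiently small $r$, the quadratic polynomial $P(\omega) := r\,D\varphi(x)\cdot\omega + \tfrac{r^2}{2}\,\omega^{\top} D^2\varphi(x)\,\omega$ satisfies $P - \eps r^2 \leq g \leq P + \eps r^2$ on $S^{N-1}$. Since $S^{N-1}$ is compact and connected (this is where $N \geq 2$ is essential), Corollary~\ref{uni-mono} together with \eqref{homo-trans} gives $\med\{P\} - \eps r^2 \leq \med\{g\} \leq \med\{P\} + \eps r^2$, so that $\med\{g\} = \med\{P\} + o(r^2)$. It therefore suffices to compute the median of the explicit polynomial $P$ to order $r^2$. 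Choosing orthonormal coordinates in which $D\varphi(x) = a\,e_N$ with $a := |D\varphi(x)| \neq 0$, the linear part $a\omega_N$ is odd under the antipodal map, so the equator $\{\omega_N = 0\}$ is its bisecting level set; hence $\med\{P\} = O(r^2)$, and I write $\med\{P\} = r^2\mu + o(r^2)$.

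The heart of the argument is to identify $\mu$ by a first-order perturbation of the bisecting hypersurface. Writing $Q(\omega) := \omega^{\top} D^2\varphi(x)\,\omega$, the median level $m = r^2\mu$ is characterized by $|\{P > m\}| = \tfrac12|S^{N-1}|$. Because $a \neq 0$, for small $r$ the level set $\{P = m\}$ is a graph over the equatorial sphere $S^{N-2} = \{\omega_N = 0\}$ of the form $\omega_N = -\tfrac{r^2}{2a}\bigl(Q(\omega',0) - 2\mu\bigr) + o(r^2)$; this is precisely where the hypothesis $|D\varphi(x)| \neq 0$ enters, as it makes $\omega_N$ a legitimate transverse coordinate near the equator. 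Computing the measure of the thin band between this graph and the equator, and using that the surface Jacobian tends to $1$ there, the bisection condition reduces at leading order to $\fint_{S^{N-2}}\bigl(\tfrac12 Q(\omega',0) - \mu\bigr)\,d\mathcal{H}^{N-2} = 0$, that is, $\mu = \tfrac12\fint_{S^{N-2}} Q(\omega',0)\,d\mathcal{H}^{N-2}$. On the equator $Q(\omega',0)$ is the Hessian form restricted to the hyperplane $\Sigma$ orthogonal to $D\varphi(x)$, so the elementary moment identity $\fint_{S^{N-2}}\omega_i\omega_j\,d\mathcal{H}^{N-2} = \tfrac{\delta_{ij}}{N-1}$ gives $\fint_{S^{N-2}} Q(\omega',0)\,d\mathcal{H}^{N-2} = \tfrac{1}{N-1}\operatorname{tr}\!\bigl(D^2\varphi(x)|_{\Sigma}\bigr) = \tfrac{1}{N-1}\Delta_1\varphi(x)$ by \eqref{1Lap-trace}. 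Hence $\mu = \tfrac{\Delta_1\varphi(x)}{2(N-1)}$, and with $r^2 = 2h$ this gives $\med_{S^{N-1}}\{g\} = \tfrac{h}{N-1}\Delta_1\varphi(x) + o(h)$, which rearranges to \eqref{1LapSpheres}.

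I expect the main obstacle to be making this band computation rigorous to genuine order $o(h)$: one must justify the graph representation of $\{P = m\}$ uniformly in $\omega'$, control the surface-measure Jacobian along the perturbed level set, and confirm that both the $\omega_N$-dependence of $Q$ and the curvature of the sphere contribute only at order $o(r^2)$. The cleanest implementation is probably through the coarea formula: one writes the imbalance $|\{P > 0\}| - \tfrac12|S^{N-1}|$ as an equatorial integral and divides by $\int_{\{P = 0\}}|\nabla_{S} P|^{-1}\,d\mathcal{H}^{N-2} \to a^{-1}|S^{N-2}|$, the nonvanishing gradient again keeping this denominator bounded away from $0$. This both recovers the two-dimensional computation of \cite{hartenstine:asc11} and extends it to all $N \geq 2$.
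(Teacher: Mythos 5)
Your proposal is correct, and it reaches \eqref{1LapSpheres} by a genuinely different route than the paper. The paper works directly with the level sets of $\varphi$ itself: by the Implicit Function Theorem the median level set bisects the sphere, so for $N=2$ there are antipodal points $y^{\pm}_{h}$ at which $\varphi$ equals its median; Taylor expansion at those points shows their direction is nearly orthogonal to $D\varphi(x)$, and for $N\geq 3$ the paper extracts several antipodal pairs from the median level surface and invokes the trace characterization \eqref{1Lap-trace}, treating $N=2,3$ explicitly and asserting higher dimensions by analogy. You instead (i) sandwich $\med\{\varphi\}$ between the medians of the Taylor polynomial $P\pm\eps r^{2}$, using precisely the monotonicity and translation invariance of the median (Corollary \ref{uni-mono} and \eqref{homo-trans}) to reduce everything to an explicit polynomial, and (ii) compute $\med\{P\}$ by a bisection/band argument over the equator orthogonal to $D\varphi(x)$, where the moment identity $\fint_{S^{N-2}}\omega_{i}\omega_{j}\,d\mathcal{H}^{N-2}=\delta_{ij}/(N-1)$ produces the constant $\frac{1}{N-1}\operatorname{tr}\bigl(D^{2}\varphi(x)|_{\Sigma}\bigr)$ directly. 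Your route is uniform in all $N\geq 2$, and the equatorial average is exactly the quantitative content of ``the median level set bisects the sphere,'' which the paper uses only qualitatively: the paper's passage from $D^{2}\varphi\,v_{h}\cdot v_{h}+D^{2}\varphi\,w_{h}\cdot w_{h}$ to the trace tacitly requires the antipodal pairs to supply an (almost) orthonormal frame of $\Sigma$, a point your computation circumvents entirely. What the paper's approach buys in exchange is brevity and geometric transparency, since it locates explicit points where the median is attained.

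One slip to fix: solving $ra\omega_{N}+\frac{r^{2}}{2}Q(\omega)=r^{2}\mu$ gives a graph of height $\omega_{N}=-\frac{r}{2a}\bigl(Q(\omega',0)-2\mu\bigr)+O(r^{2})$, i.e.\ of order $r$, not $r^{2}$ as you wrote. This is harmless for the conclusion, because the band height enters the bisection identity only through the common prefactor $\frac{r}{2a}$, which divides out, leaving $\mu=\frac{1}{2}\fint_{S^{N-2}}Q(\omega',0)\,d\mathcal{H}^{N-2}+O(r)$ as needed; but the error bookkeeping in your ``main obstacle'' paragraph should be done at this order (signed band measure of size $O(r)$ with $O(r^{2})$ corrections), and the coarea denominator is $\approx (ra)^{-1}|S^{N-2}|$ rather than $a^{-1}|S^{N-2}|$.
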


\begin{proof}

We begin by recalling the proof given in \cite{hartenstine:asc11} when $N=2$.
Since $|D \varphi(x)| \neq 0$, the Implicit Function Theorem guarantees that, for sufficiently 
small $h > 0$, the level sets of $\varphi$ are smooth curves that foliate the closure of the ball $ B(x,\sqrt{2h}) $.
There is a unique level curve corresponding to the median of $\varphi$ over the circle $\partial B(x,\sqrt{2h}) $; 
by definition of the median, this curve must separate the circle into two arcs of equal length, yielding
antipodal points $y^{+}_{h}, y^{-}_{h} \in \partial B(x,\sqrt{2h})$ such that
\begin{equation} \label{antipode2}
\varphi(y^{+}_{h}) ~ = ~ \varphi( y^{-}_{h} ) ~ = ~ \med_{\partial B(x,\sqrt{2h})}{ \{ \, \varphi \, \} } \ .
\end{equation}

Letting
\[
v_{h} ~ :=  ~ \frac{ \ y^{+}_{h} - x \ }{ \sqrt{2h \, } } ~ =  ~ - ~ \frac{ \ y^{-}_{h} - x \ }{ \sqrt{2h \, } } \quad ,
\]
we have the Taylor expansions 
\[
\varphi(y^{+}_{h}) ~ = ~ 
\varphi(x) + \sqrt{2h} \, D\varphi(x)\cdot v_{h} + h \, D^{2}\varphi(x) v_{h} \cdot v_{h} 
+ o( h ) 
\]
and
\[
\varphi(y^{-}_{h}) ~ = ~ 
\varphi(x) - \sqrt{2h} \, D\varphi(x)\cdot v_{h} + h \, D^{2}\varphi(x) v_{h} \cdot v_{h}
+ o( h ) \ ,
\]
which must be equal by \eqref{antipode2}. Subtracting one from the other, we see that
\[
2\sqrt{2h} \ D\varphi(x) \cdot v_{h} ~ = ~ o(h) \ ,
\]
from which it follows that
\begin{equation} \label{vh}
v_{h} ~ = ~ \frac{ \ D\varphi(x)^{\perp} \ }{ \ | D \varphi(x) | \ } ~  + ~ e_{h} \ , 
\end{equation}
where
\[
D \varphi(x)^{\perp} ~ = ~ \left[ \begin{array}{lr} 0 & -1 \\ 1 & 0 \end{array} \right] D\varphi(x) 
\quad \quad \mbox{and} \quad \quad
2\sqrt{2h} \ D \varphi(x) \cdot e_{h} ~ = ~ o(h) \ .
\]
Substituting the expression for $v_{h}$ from \eqref{vh} into either Taylor expansion and calculating directly 
completes the proof in this case.

Before proceeding to the proof in higher dimensions, we rephrase the proof just given in a form that will be 
easier to generalize. Using a normal coordinate system $(y_{1},y_{2})$ centered at $x$, we can assume that $x = (0,0)$ and that
$D \varphi(x) $ determines the direction of the $y_{2}$-axis. Up to second order (i.e., modulo $o(h)$ errors since the radius is 
$\sqrt{2h}$), we have the following: 
the level curve passing through $x$ is 
the parabola $y_{2} = - \frac{1}{2} \kappa y_{1}^{2} $ (where $\kappa$ is the curvature at the vertex $x$), 
the level curves of $\varphi$ in $B(x,\sqrt{2h})$ are translates of this parabola, and the median
corresponds to the level curve $ y_{2} = \kappa h - \frac{1}{2} \kappa y_{1}^{2} $. In this coordinate system, then,
\[
\varphi( \pm \sqrt{2h}, 0 ) ~ = ~ \med_{\partial B(x,\sqrt{2h})}{ \{ \, \varphi \, \} } + o(h) \ ,
\]
after which the rest of the proof remains the same.

When $ N = 3 $, we know by the Implicit Function Theorem that, for sufficiently 
small $h > 0$, the level sets of $\varphi$ are smooth $2$-dimensional surfaces that foliate the closure of 
the ball $ B(x,\sqrt{2h}) $. Introduce coordinates $ ( y_{1}, y_{2}, y_{3} ) $ such that $x = (0,0,0)$ and 
the $y_{3}$-axis is parallel to $D \varphi(x) $; up to second order, the level sets of $\varphi$ in $ B(x,\sqrt{2h}) $
are translates of a quadratic polynomial $ p( y_{1}, y_{2} ) $ (namely, the second fundamental form of the level surface through $x$), 
and the median corresponds to the level surface
$ y_{3} = \alpha + p( y_{1}, y_{2} ) $ whose intersection $\Gamma$ with $ \partial B(x,\sqrt{2h}) $ bisects the surface area of 
$ \partial B(x,\sqrt{2h}) $. Consequently, since $p( y_{1}, y_{2} ) =  p( -y_{1}, -y_{2} )$, $\Gamma$ must intersect the plane
$\{ y_{3} = 0 \} $ in at least two distinct pairs of antipodal points. These pairs of antipodal points yield independent 
unit vectors $v_{h}$ and $w_{h}$ such that
\[
\med_{\partial B(x,\sqrt{2h})}{ \{ \, \varphi \, \} } ~ = ~
\varphi(x) \pm \sqrt{2h} \, D\varphi(x)\cdot v_{h} + h \, D^{2}\varphi(x) v_{h} \cdot v_{h} 
+ o( h ) 
\]
and 
\[
\med_{\partial B(x,\sqrt{2h})}{ \{ \, \varphi \, \} } ~ = ~
\varphi(x) \pm \sqrt{2h} \, D\varphi(x)\cdot w_{h} + h \, D^{2}\varphi(x) w_{h} \cdot w_{h} 
+ o( h ) \ ,
\]
from which we conclude as above that $v_{h}$ and $w_{h}$ are nearly orthogonal to $D \varphi(x) $. 
More precisely, we have
\begin{equation} \label{3d-v}
\med_{\partial B(x,\sqrt{2h})}{ \{ \, \varphi \, \} } ~ = ~
\varphi(x) + h \, D^{2}\varphi(x) v_{h} \cdot v_{h} + o( h ) 
\end{equation}
and 
\begin{equation} \label{3d-w}
\med_{\partial B(x,\sqrt{2h})}{ \{ \, \varphi \, \} } ~ = ~
\varphi(x) + h \, D^{2}\varphi(x) w_{h} \cdot w_{h} + o( h ) \ ,
\end{equation}
and averaging equations \eqref{3d-v} and \eqref{3d-w} yields
\begin{equation} \label{3d-v-and-w}
\med_{\partial B(x,\sqrt{2h})}{ \{ \, \varphi \, \} } =
\varphi(x) + \frac{ h }{ 2 } \left( D^{2}\varphi(x) v_{h} \cdot v_{h} + D^{2}\varphi(x) w_{h} \cdot w_{h} \right) + o( h ) .
\end{equation} 
Using the characterization \eqref{1Lap-trace} of the $1$-Laplacian above, we can rewrite \eqref{3d-v-and-w} in the form
\[
\med_{\partial B(x,\sqrt{2h})}{ \{ \, \varphi \, \} } ~ = ~
\varphi(x) ~ + ~ \frac{ h }{ 2 } \, \Delta_{1}\varphi(x) + o( h ) \ ,
\]
which is equation \eqref{1LapSpheres} when $N=3$.

A similar argument now applies in higher dimensions, using a quadratic polynomial of $(N-1)$ variables to approximate the
smooth submanifold on which $\varphi$ achieves its median. 

\end{proof}

Given $h > 0$ and $\varphi \in BUC( \R^{N} )$, we define the 
nonlinear averaging operator $M^{h}_{1} \colon BUC( \R^{N} ) \to BUC( \R^{N} )$ by
\begin{equation} \label{M1h}
\left( M^{h}_{1} \varphi \right)(x) ~ := ~ \med_{\partial B(x,\sqrt{2h})}{ \left\{ \, \varphi \, \right\} } \ , \quad \mbox{for} \quad x \in \R^{N} \ .
\end{equation}
The fact that $M^{h}_{1} \varphi$ is bounded when $\varphi$ is bounded follows directly from the stability 
of the median mentioned above, and Proposition 2.2 of \cite{hartenstine:sfe13} verifies that $M^{h}_{1} \varphi$ is
uniformly continuous whenever $\varphi$ is uniformly continuous. We see from the preceding results that the 
operator $M^{h}_{1}$ has the following important properties: 
\begin{enumerate}[(i)]

\item
translation invariance: $M^{h}_{1} \left( v + c \right) ~ = ~ M^{h}_{1}v + c$ for any $v \in BUC( \R^{N} )$  and $c \in \R$.  

\item
monotonicity: $M^{h}_{1} v ~ \leq ~ M^{h}_{1} w$ ~ whenever $v, w \in BUC( \R^{N} )$ satisfy $v ~ \leq ~ w$. 

\item
$1$-homogeneity: if $\alpha \in \R$ and $v \in BUC( \R^{N} )$, then 
$M^{h}_{1}( \, \alpha v \, ) = \alpha \, M^{h}_{1}(v) $ .

\item
stability: $ \| M^{h}_{1} v \|_{\infty} \leq \| v \|_{\infty} $ for any $v \in BUC( \R^{N} )$.

\item
consistency: for any smooth ~ $\varphi$ with nonvanishing gradient, 
\[
\lim_{h \rightarrow 0}{ \left( \frac{ \varphi - \left( M^{h}_{1} \varphi \right) }{ h } \right) } ~ = ~ 
- \ \frac{ 1 }{ \, N-1 \, } \ \Delta_{1} \varphi \ .
\] 

\end{enumerate}
Consequently, a direct application of the framework developed in \cite{barles:cas91} proves the following:

\begin{theorem} \label{mc-soln-thm}
Let $u_{0} \in BUC(\R^{N})$ be given. The function $u \colon [0, \infty) \to BUC( \R^{N} )$ 
defined by
\begin{equation} \label{mc-soln}
u(t) ~ := ~ \lim_{n \to \infty}{ \left( M^{t/n}_{1} \right)^{n} u_{0} } 
\end{equation}
is the unique viscosity solution of 
\begin{equation} \label{mc-med-flow}
\left\{ \begin{array}{c}
\displaystyle{ u_{t} \ - \ \frac{ 1 }{ N-1 } \, \Delta_{1} u ~ = ~ 0 \quad \mathrm{for} \quad x \in \R^{N}, \  t > 0 \ , } \\
\\
u(x,0) ~ = ~ u_{0}(x) \quad \mbox{for} \quad x \in \R^{N} \ .
\end{array} \right.
\end{equation}
\end{theorem}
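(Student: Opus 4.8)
The plan is to verify that the family $\{ M^{h}_{1} \}_{h > 0}$ fits the abstract convergence theorem of Barles and Souganidis \cite{barles:cas91} — the viscosity-solution analogue of Chernoff's Product Formula — which asserts that a monotone, stable, and consistent approximation scheme converges to the unique viscosity solution of its limiting equation whenever that equation enjoys a comparison principle. The five properties catalogued above are exactly the hypotheses this framework demands, so the work lies in organizing them into the standard convergence argument rather than in producing any new estimate.

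First I would recast the iteration as a one-step time-discretization. With $h = t/n$ and $u^{n}_{k} := ( M^{h}_{1} )^{k} u_{0}$, the recursion $u^{n}_{k+1} = M^{h}_{1} u^{n}_{k}$ plays the role of the scheme operator $S(h,\cdot)$ in the Barles--Souganidis formalism. Monotonicity (property (ii)) makes the scheme monotone in the required sense; the stability bound (property (iv)) furnishes uniform $L^{\infty}$ control on all iterates; and translation invariance together with $1$-homogeneity (properties (i) and (iii)) supply the commutation with additive and multiplicative constants that the comparison step exploits.

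The core of the proof is the passage to the limit by the method of half-relaxed limits. I would introduce the upper and lower relaxed limits $\overline{u}$ and $\underline{u}$ of the time-interpolated iterates, and then invoke the consistency relation (property (v)), which is precisely Lemma \ref{taylor-median}, to show that $\overline{u}$ is a viscosity subsolution and $\underline{u}$ a viscosity supersolution of $u_{t} - \frac{1}{N-1} \Delta_{1} u = 0$. Concretely, one lets a smooth $\phi$ touch $\overline{u}$ from above at a point, expands $M^{h}_{1} \phi$ there via \eqref{1LapSpheres}, divides by $h$, and reads off the correct sign of $\phi_{t} - \frac{1}{N-1} \Delta_{1} \phi$ in the limit; the supersolution case is symmetric.

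The hard part will be the singularity of $\Delta_{1}$ at points where the gradient vanishes, since the consistency expansion \eqref{1LapSpheres} is available only when $D\varphi \neq 0$. This is exactly the difficulty treated by the Giga--Goto--Ishii--Sato theory \cite{giga:cpc91}: at a critical point of the test function one must fall back on the relaxed definition of viscosity solution for the singular operator, in which the inequality at such points is reformulated through the semicontinuous envelopes of the discontinuous Hamiltonian (equivalently, is required only over an admissible class of test functions). Granting this, the comparison principle of \cite{giga:cpc91} — valid precisely for geometric, degenerate-elliptic equations of this type — forces $\overline{u} \leq \underline{u}$; since $\underline{u} \leq \overline{u}$ holds trivially, the two relaxed limits coincide, so the limit \eqref{mc-soln} exists locally uniformly and is the unique viscosity solution of \eqref{mc-med-flow}.
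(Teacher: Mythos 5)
Your proposal is correct and takes essentially the same route as the paper: the paper likewise checks that $M^{h}_{1}$ is monotone, stable, translation invariant, homogeneous, and consistent (via Lemma \ref{taylor-median}), then invokes the Barles--Souganidis framework \cite{barles:cas91} for convergence and the comparison principle of \cite{giga:cpc91} (see also \cite{giga:see06}) for uniqueness of the solution of the singular limiting equation. The only difference is one of exposition: the paper leaves the internals of that framework implicit, while you spell out the half-relaxed-limit argument and the treatment of test functions with vanishing gradient.
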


\noindent As mentioned toward the end of Section \ref{intro}, the uniqueness of this solution follows from the 
relevant comparison principle (\cite{giga:see06}, \cite{giga:cpc91}).

Although we only stated the exponential formula \eqref{chernoff} for the solution of the heat equation in terms
of averages over spheres, an analogous formula based on averages over balls certainly holds. It is not 
clear whether a version of Theorem \ref{mc-soln-thm} based on medians over closed balls is true, as 
we do not have a proof of the consistency result in Lemma \ref{taylor-median} when we replace spheres 
with closed balls. The earlier results of Bence-Merriman-Osher \cite{merriman:mmf94} and Ruuth-Merriman \cite{ruuth:cgm00}, 
however, suggest that using medians over closed balls should work; this would be an interesting issue to resolve.

\section{Midranges and the parabolic $\infty$-Laplacian} \label{midrange}

We now consider the Cauchy problem
\begin{equation} \label{infty-lap-cauchy}
\left\{ \begin{array}{c}
u_{t} \, - \, \Delta_{\infty} u ~ = ~ 0 \quad \mathrm{for} \quad x \in \R^{N} \quad \mbox{and} \quad t > 0 \, , \\
\\
u(x,0) ~ = ~ u_{0}(x) \quad \mbox{for} \quad x \in \R^{N} \ , 
\end{array} \right.
\end{equation} 
where $u_{0} \in BUC(\R^{N})$ and the $\infty$-Laplacian $\Delta_{\infty}$ is defined by
\[
\Delta_{\infty} \varphi ~ := ~ 
\frac{1}{ |D\varphi|^{2} } \, \sum_{i,j=1}^{N}{ \frac{\partial \varphi}{\partial x_{i}} \, \frac{\partial \varphi}{\partial x_{j}} \, \frac{\partial^{2} \varphi}{ \partial x_{i} \partial x_{j} } }
\]
for smooth $\varphi$ with $|D\varphi| \neq 0$. 
In contrast to the heat equation and the mean curvature equation, there are surprisingly few papers devoted to
parabolic problems involving the $\infty$-Laplacian. In \cite{juutinen:egi06}, Juutinen and Kawohl answered 
basic existence and uniqueness questions for the Cauchy problem \eqref{infty-lap-cauchy} as well as its analogue 
on bounded domains with Dirichlet boundary conditions; they proved, in particular, that \eqref{infty-lap-cauchy} has 
a unique bounded solution. Subsequently, Akagi, Juutinen and Kajikiya studied the asymptotic behavior of the 
solutions of these evolution problems, proving, among other results, that the optimal decay rate (in $L^{\infty}( \R^{N} )$) 
of the solution of \eqref{infty-lap-cauchy} when $u_{0}$ has compact support is $(t+1)^{-1/6}$. More recently,
Manfredi, Parviainen and Rossi \cite{manfredi:amv10} proved an asymptotic statistical characterization of 
solutions of \eqref{infty-lap-cauchy} that complements the present work.

To establish a statistical exponential formula for the solution of \eqref{infty-lap-cauchy},  
we recall the following relationship between the $\infty$--Laplacian and the average of extreme values:
\begin{lemma} \label{taylor-midrange}
For an open set $\Omega \subset \R^{N}$, $x \in \Omega$, and a smooth function 
$\varphi \colon \Omega \to \R$ with $|D \varphi(x)| \neq 0$, 
\begin{equation} \label{InftyLapBalls}
\varphi(x) ~ - ~ 
\midrange_{\partial B(x,\sqrt{2h})}{ \left\{ \, \varphi \, \right\} } ~ = ~ - \ h \, \Delta_{\infty} \varphi(x) ~ + ~ o( h ) \, ,
\end{equation}
where, for any compact set $K \subset \R^{N}$, 
\[
\midrange_{K}{ \left\{ \, \varphi \, \right\} } ~ := ~
\frac{1}{2} \left( \max_{ K }{ \left\{ \, \varphi \, \right\} } + \min_{ K }{  \left\{ \, \varphi \, \right\} } \right) \ .
\]
\end{lemma}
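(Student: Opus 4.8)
The plan is to mimic the Taylor-expansion strategy used in the proof of Lemma \ref{taylor-median}, but now tracking the maximum and minimum of $\varphi$ over the sphere rather than its median. First I would parametrize $\partial B(x,\sqrt{2h})$ by $y = x + \sqrt{2h}\, v$ with $|v| = 1$ and record the second-order expansion
\[
\varphi(y) = \varphi(x) + \sqrt{2h}\, D\varphi(x)\cdot v + h\, D^{2}\varphi(x)v\cdot v + R_{h}(v),
\]
where, because $\varphi$ is smooth and $|y - x| = \sqrt{2h}$, the remainder satisfies $R_{h}(v) = O(h^{3/2})$ uniformly in $v$; in particular $R_{h}(v) = o(h)$ uniformly. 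Writing $n := D\varphi(x)/|D\varphi(x)|$, which is well-defined since $|D\varphi(x)| \neq 0$, the $O(\sqrt{h})$ gradient term is maximized at $v = n$ and minimized at $v = -n$, so the extrema of $\varphi$ over the sphere are attained at points $y^{\pm}_{h} = x + \sqrt{2h}\, v^{\pm}_{h}$ with $v^{\pm}_{h} \to \pm n$ as $h \to 0$.

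The crucial quantitative step is to show that the maximizing direction deviates from $n$ only by an angle of order $\sqrt{h}$. Indeed, if $\theta$ denotes the angle between $v$ and $n$, the gradient term penalizes deviation at order $\sqrt{h}\,\theta^{2}$ while the Hessian term can reward it only at order $h\,\theta$, so balancing these forces $\theta = O(\sqrt{h})$, i.e. $v^{+}_{h} = n + O(\sqrt{h})$ and symmetrically $v^{-}_{h} = -n + O(\sqrt{h})$. Substituting back, the angular correction changes the gradient term by $O(\sqrt{h}\cdot h) = o(h)$ and the quadratic form $D^{2}\varphi(x) v^{\pm}_{h}\cdot v^{\pm}_{h}$ by $O(\sqrt{h})$, hence changes the Hessian contribution by $O(h^{3/2}) = o(h)$. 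I would thus obtain
\[
\max_{\partial B(x,\sqrt{2h})} \varphi = \varphi(x) + \sqrt{2h}\,|D\varphi(x)| + h\, D^{2}\varphi(x)\, n\cdot n + o(h)
\]
and
\[
\min_{\partial B(x,\sqrt{2h})} \varphi = \varphi(x) - \sqrt{2h}\,|D\varphi(x)| + h\, D^{2}\varphi(x)\, n\cdot n + o(h),
\]
the point being that the $O(\sqrt{h})$ first-order terms carry opposite signs while the $O(h)$ second-order terms agree, since $(-n)$ produces the same quadratic form as $n$.

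Finally, averaging these two expansions makes the $\sqrt{h}$ terms cancel, leaving
\[
\midrange_{\partial B(x,\sqrt{2h})}\{\varphi\} = \varphi(x) + h\, D^{2}\varphi(x)\, n\cdot n + o(h),
\]
and recognizing $D^{2}\varphi(x)\, n\cdot n = \Delta_{\infty} \varphi(x)$ from the definition of the $\infty$-Laplacian yields \eqref{InftyLapBalls} after rearranging. The main obstacle I anticipate is the second step: rigorously justifying that the argmax and argmin sit within angle $O(\sqrt{h})$ of $\pm n$ and that this deviation is invisible at order $h$. This is where nondegeneracy of the gradient is essential, since it guarantees that the first-order term genuinely dominates and localizes the extremizers, and where the uniformity of the Taylor remainder over the compact sphere must be invoked carefully to keep every error term $o(h)$.
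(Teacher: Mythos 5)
Your argument is correct, and it follows exactly the route the paper intends: the paper gives no proof of Lemma \ref{taylor-midrange}, merely citing \cite{kawohl:snp12}, \cite{legruyer:he98}, \cite{legruyer:aml07}, \cite{manfredi:amv10}, \cite{oberman:cds05} and remarking that \eqref{InftyLapBalls} ``follows from the elementary fact that the gradient is the direction of steepest ascent.'' Your write-up is a correct rigorous elaboration of precisely that heuristic --- the uniform second-order Taylor expansion, the $O(\sqrt{h})$ angular localization of the maximizer and minimizer about $\pm D\varphi(x)/|D\varphi(x)|$ (which makes the angular corrections to both the gradient and Hessian terms $o(h)$), the cancellation of the $\pm\sqrt{2h}\,|D\varphi(x)|$ terms in the midrange, and the identification $D^{2}\varphi(x)\,n\cdot n = \Delta_{\infty}\varphi(x)$ --- and it supplies the quantitative localization step that the paper's one-line justification glosses over.
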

\noindent Identity \eqref{InftyLapBalls} has been used in various forms 
elsewhere (\cite{kawohl:snp12}, \cite{legruyer:he98}, \cite{legruyer:aml07}, \cite{manfredi:amv10}, \cite{oberman:cds05})
and follows from the elementary fact that the gradient is the direction of steepest ascent.
Note that, like the median operator discussed in the previous 
section, the midrange operator computes a nonlinear average and is monotone, stable, 
translation invariant (in the sense used earlier), and $1$-homogeneous. Moreover, 
Lemma \ref{taylor-midrange} shows that the midrange operator provides a consistent approximation of the 
$\infty$-Laplacian. Proceeding as in Section \ref{mc}, we therefore define the
nonlinear averaging operator $M^{h}_{\infty} \colon BUC(\R^{N}) \to BUC(\R^{N})$ by
\begin{equation} \label{M-h-infty}
\left( M^{h}_{\infty} \varphi \right)(x) ~ := ~ 
\midrange_{\partial B(x,\sqrt{2h})}{ \left\{ \, \varphi \, \right\} } \ , \quad \mbox{for} \quad x \in \R^{N} ,
\end{equation}
and apply the machinery of \cite{barles:cas91} to obtain
\begin{theorem}
Let $u_{0} \in BUC(\R^{N})$ be given. The function $u \colon [0, \infty) \to BUC( \R^{N} )$ 
defined by
\begin{equation} \label{infty-lap-soln}
u(t) ~ := ~ \lim_{n \to \infty}{ \left( M^{t/n}_{\infty} \right)^{n} u_{0} } 
\end{equation}
is the unique bounded, continuous viscosity solution of \eqref{infty-lap-cauchy}.
\end{theorem}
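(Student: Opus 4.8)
The plan is to proceed exactly as for Theorem \ref{mc-soln-thm}, verifying that the midrange operator $M^{h}_{\infty}$ defines a monotone, stable, translation-invariant, and consistent approximation scheme and then invoking the viscosity version of Chernoff's Product Formula from \cite{barles:cas91}. The first step is to confirm that $M^{h}_{\infty}$ maps $BUC(\R^{N})$ into itself: boundedness is immediate from the stability estimate $\| M^{h}_{\infty} \varphi \|_{\infty} \leq \| \varphi \|_{\infty}$, while uniform continuity of $x \mapsto (M^{h}_{\infty}\varphi)(x)$ follows from the uniform continuity of $\varphi$ together with the observation that $\max$ and $\min$ of a uniformly continuous function over $\partial B(x,\sqrt{2h})$ depend uniformly continuously on the center $x$ (a translation argument of the kind used for $M^{h}_{1}$ in \cite{hartenstine:sfe13}). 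I would then record, as in the list preceding Theorem \ref{mc-soln-thm}, that monotonicity, $1$-homogeneity, stability, and translation invariance are each inherited directly from the corresponding elementary properties of the maximum and minimum over a compact set.

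The consistency of the scheme is furnished by Lemma \ref{taylor-midrange}, which gives, for smooth $\varphi$ with $D\varphi(x) \neq 0$,
\[
\lim_{h \to 0}{ \frac{ \varphi(x) - \left( M^{h}_{\infty} \varphi \right)(x) }{ h } } ~ = ~ -\,\Delta_{\infty} \varphi(x) \ .
\]
With monotonicity, stability, and consistency in hand, the abstract convergence theorem of Barles and Souganidis \cite{barles:cas91} guarantees that the iterates $\left( M^{t/n}_{\infty} \right)^{n} u_{0}$ converge locally uniformly on $\R^{N} \times [0,\infty)$ and that the limit $u(t)$ defined by \eqref{infty-lap-soln} is a viscosity solution of \eqref{infty-lap-cauchy}. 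The identification of $u(t)$ as the \emph{unique} bounded, continuous solution, and the upgrade of the half-relaxed limits $\limsup^{*}$ and $\liminf_{*}$ to a genuine limit, both rest on the comparison principle for singular geometric equations established in \cite{giga:cpc91} (see also \cite{giga:see06}) and applied to \eqref{infty-lap-cauchy} in \cite{juutinen:egi06}; this comparison principle is precisely the ingredient the Barles--Souganidis framework requires.

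The main obstacle is the singular gradient dependence of $\Delta_{\infty}$. Lemma \ref{taylor-midrange} supplies consistency only where $D\varphi \neq 0$, whereas the convergence argument must test the relaxed limits against every smooth $\varphi$ touching them, including those whose gradient vanishes at the contact point. I therefore expect the delicate step to be checking consistency at a point $x$ with $D\varphi(x) = 0$ against the \emph{singular} definition of viscosity solution used in \cite{giga:cpc91} and \cite{juutinen:egi06}. A direct second-order expansion on $\partial B(x,\sqrt{2h})$ shows that in this degenerate regime
\[
\frac{ \varphi(x) - \left( M^{h}_{\infty} \varphi \right)(x) }{ h } ~ \longrightarrow ~ -\,\tfrac{1}{2}\left( \lambda_{\max}\!\left( D^{2}\varphi(x) \right) + \lambda_{\min}\!\left( D^{2}\varphi(x) \right) \right) \ ,
\]
the midrange of the extreme eigenvalues of the Hessian, which must be reconciled with the relaxed equation, whose envelopes at $D\varphi = 0$ involve $\lambda_{\max}$ and $\lambda_{\min}$ separately. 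The reconciliation proceeds exactly as in the singular theory underlying Theorem \ref{mc-soln-thm}: the standard reduction for geometric equations (\cite{giga:cpc91}, \cite{giga:see06}) allows one to restrict the degenerate test to functions with $D^{2}\varphi(x) = 0$ as well, and at such points the scheme's generator vanishes, matching the degenerate condition $\varphi_{t} \leq 0$ (respectively $\varphi_{t} \geq 0$). Once this compatibility is confirmed, all hypotheses of \cite{barles:cas91} are met and the theorem follows.
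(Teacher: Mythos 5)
Your proposal is correct and follows essentially the same route as the paper: the paper's proof consists precisely of noting that $M^{h}_{\infty}$ is monotone, stable, translation invariant, and $1$-homogeneous, that Lemma \ref{taylor-midrange} supplies consistency, and then applying the machinery of \cite{barles:cas91} together with the comparison principle of \cite{giga:cpc91} (cf. \cite{juutinen:egi06}) for uniqueness. Your additional treatment of the degenerate case $D\varphi(x)=0$ goes beyond what the paper records explicitly and is sound in substance, though note that your own computation already suffices there --- the sandwich $\lambda_{\min}\left( D^{2}\varphi(x) \right) \leq \tfrac{1}{2}\left( \lambda_{\max}\left( D^{2}\varphi(x) \right) + \lambda_{\min}\left( D^{2}\varphi(x) \right) \right) \leq \lambda_{\max}\left( D^{2}\varphi(x) \right)$ gives the relaxed consistency with the semicontinuous envelopes directly, so you need not invoke the reduction to test functions with vanishing Hessian, whose standard justification is for geometric equations and $\Delta_{\infty}$ is not geometric.
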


\section{Homogeneous diffusion in $\R^{N}$} \label{homo}

The evolution equations studied in Sections \ref{intro}, \ref{mc} and \ref{midrange} 
belong to the one-parameter family of Cauchy problems  
\begin{equation} \label{homo-cauchy}
\left\{ \begin{array}{c}
\displaystyle{ u_{t} \ - \ c_{p,N} \, \Delta^{1}_{p} u ~ = ~ 0 \quad \mbox{for} \quad x \in \R^{N} \quad 
\mbox{and} \quad t > 0 \, , } \\
\\
u(x,0) ~ = ~ u_{0}(x) \quad \mbox{for} \quad x \in \R^{N} \ ,
\end{array} \right.
\end{equation}
where 
\[
c_{p,N} ~ :=  ~ \frac{p}{ N + p - 2 } 
\]
and the $1$-homogeneous $p$-Laplacian $\Delta^{1}_{p}$ is defined, for $ 1 \leq p \leq \infty$, by
\begin{equation} \label{homo-pLap}
\Delta_{p}^{1} \varphi ~ := ~ \left\{ \begin{array}{ccc}
(1 - \frac{1}{p}) \Delta \varphi \, + \, (\frac{2}{p} - 1) \Delta_{1} \varphi \ & \mbox{if} & 1 \leq p \leq 2 \, , \\
& & \\
\frac{1}{p} \Delta \varphi \, + \, (1 - \frac{2}{p}) \Delta_{\infty} \varphi  & \mbox{if} & p \geq 2 \, . \\
\end{array} \right.
\end{equation}
Note that $\Delta^{1}_{1} = \Delta_{1}$ and $\Delta^{1}_{\infty} = \Delta_{\infty}$, while 
$\Delta^{1}_{2} = \frac{1}{2} \Delta$. 

Given $h > 0$ and $p \in [1,\infty]$, we define the statistical  
operator $M^{h}_{p} \colon BUC(\R^{N}) \to BUC(\R^{N})$ by
\[
\left(M^{h}_{p} \varphi \right)(x) := \left\{ \begin{array}{c}
\displaystyle{ (1-q) \med_{\partial B(x,\sqrt{2h})}{ \left\{ \, \varphi \, \right\} } + 
q \fint_{\partial B(x,\sqrt{2h})}{ \varphi \, ds } } \, , \quad 1 \leq p \leq 2 , \\
\\
\displaystyle{ ( 1 - q ) \midrange_{\partial B(x,\sqrt{2h})}{ \left\{ \, \varphi \, \right\} } + 
q \fint_{\partial B(x,\sqrt{2h})}{ \varphi \, ds  } } \, , \quad p \geq 2 ,
\end{array} \right.
\]
where
\begin{equation} \label{q}
q ~ = ~ q(p,N) ~ := ~ \left\{ \begin{array}{ccc}
\displaystyle{ \frac{ N ( p - 1 ) }{ N + p - 2 } \ , } & \mbox{if} & 1 \leq p \leq 2 \, , \\
& \\
\displaystyle{ \frac{ N }{ N + p - 2 } \ , } & \mbox{if} & p \geq 2 \, .
\end{array} \right.
\end{equation}
Since the operator $M^{h}_{p}$ is a simple linear combination of the averaging operators studied earlier, 
it clearly enjoys the same properties of homogeneity, stability, monotonicity, and translation invariance, with 
the consistency condition
\begin{equation} \label{homo-consistency}
\lim_{h \rightarrow 0}{ \left( \frac{ \varphi - \left( M^{h}_{p} \varphi \right) }{ h } \right) } ~ = ~ 
- \, c_{p,N} \, \Delta^{1}_{p} \varphi 
\end{equation}
following from Lemmas \ref{taylor-mean}, \ref{taylor-median}, and \ref{taylor-midrange}.
As in Sections \ref{mc} and \ref{midrange}, we can therefore combine the results of \cite{barles:cas91} 
and the comparison principle from \cite{giga:cpc91} 
to establish
\begin{theorem} 
Let $u_{0} \in BUC(\R^{N})$ be given. The function $u \colon [0, \infty) \to BUC( \R^{N} )$ 
defined by
\begin{equation} \label{homo-cauchy-soln}
u(t) ~ := ~ \lim_{n \to \infty}{ \left( M^{t/n}_{p} \right)^{n} u_{0} } 
\end{equation}
is the unique bounded, continuous viscosity solution of \eqref{homo-cauchy}.
\end{theorem}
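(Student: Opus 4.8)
The plan is to recognize formula \eqref{homo-cauchy-soln} as an instance of the viscosity version of Chernoff's product formula and to verify, one by one, the hypotheses required by the abstract convergence framework of Barles and Souganidis \cite{barles:cas91}. That framework guarantees that the iterates $\left(M^{t/n}_{p}\right)^{n} u_{0}$ converge, locally uniformly, to a viscosity solution of \eqref{homo-cauchy}, provided the one-step operator $M^{h}_{p}$ is monotone, stable, translation invariant, and consistent with the generator $-c_{p,N}\Delta^{1}_{p}$, and provided the limiting equation enjoys a comparison principle. Since $M^{h}_{p}$ is by definition a convex combination of operators already studied --- the spherical average $M^{h}_{2}$ together with either the median operator $M^{h}_{1}$ (when $1 \leq p \leq 2$) or the midrange operator $M^{h}_{\infty}$ (when $p \geq 2$), with nonnegative weights $1-q$ and $q$ summing to one by \eqref{q} --- the first four properties are inherited immediately: monotonicity, $L^{\infty}$-stability, invariance under the addition of constants, and $1$-homogeneity are each preserved under nonnegative convex combinations, and $M^{h}_{p}$ maps $BUC(\R^{N})$ into itself because each summand does.

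First I would dispatch the consistency condition \eqref{homo-consistency}. Writing $\varphi - M^{h}_{p}\varphi = (1-q)(\varphi - M^{h}_{1}\varphi) + q(\varphi - M^{h}_{2}\varphi)$ (respectively with $M^{h}_{\infty}$ in place of $M^{h}_{1}$) and substituting the three Taylor expansions from Lemmas \ref{taylor-mean}, \ref{taylor-median}, and \ref{taylor-midrange}, the limit $\lim_{h \to 0}(\varphi - M^{h}_{p}\varphi)/h$ becomes a fixed linear combination of $\Delta\varphi$, $\Delta_{1}\varphi$, and $\Delta_{\infty}\varphi$. The verification is then pure bookkeeping: the coefficient of $\Delta\varphi$ works out to $q/N$ and that of the nonlinear term to $(1-q)/(N-1)$ when $1 \leq p \leq 2$, or $(1-q)$ when $p \geq 2$, and a short computation confirms that, with $q$ as in \eqref{q}, these reproduce exactly the coefficients of $c_{p,N}\Delta^{1}_{p}$ read off from \eqref{homo-pLap}. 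I would record this calculation in both regimes to confirm that the weights in \eqref{q} are precisely the ones that make the scheme consistent.

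The genuine subtlety, and the step I expect to be the main obstacle, is that the Taylor expansions underlying consistency hold only where $|D\varphi(x)| \neq 0$, whereas $\Delta^{1}_{p}$ is singular at critical points of the test function. For viscosity solutions of the singular, degenerate equation \eqref{homo-cauchy} in the sense of Giga, Goto, Ishii and Sato \cite{giga:cpc91}, one must therefore verify consistency in the relaxed sense appropriate to such operators, testing against smooth functions whose gradient may vanish at the contact point. Here I would exploit that the median, midrange, and spherical average all lie between $\min_{\partial B}\varphi$ and $\max_{\partial B}\varphi$, so that at a degenerate contact point the increment $(\varphi - M^{h}_{p}\varphi)/h$ is squeezed between quantities controlled by the second-order behavior of $\varphi$; this yields the inequalities demanded by the upper and lower semicontinuous envelopes $F^{*}$ and $F_{*}$, which is exactly what the Barles--Souganidis notion of a consistent scheme requires for singular equations.

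Finally, I would confirm that the generator fits the structural framework of the introduction: writing $\Delta\varphi = \mathrm{tr}(D^{2}\varphi)$, $\Delta_{1}\varphi = \mathrm{tr}\bigl((I - \hat{p}\otimes\hat{p})D^{2}\varphi\bigr)$ with $\hat{p} = D\varphi/|D\varphi|$, and $\Delta_{\infty}\varphi = \hat{p}\cdot D^{2}\varphi\,\hat{p}$, the operator $F(p,X) = -c_{p,N}\Delta^{1}_{p}$ is continuous on $(\R^{N}\setminus\{0\})\times S^{N}$, $1$-homogeneous, and degenerate elliptic, so conditions (i)--(iii) hold and the comparison principle of \cite{giga:cpc91} applies. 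Invoking that comparison principle pins down a unique bounded, continuous viscosity solution and identifies the locally uniform limit produced by the Chernoff formula of \cite{barles:cas91} with it, completing the proof.
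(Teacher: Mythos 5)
Your proposal follows essentially the same route as the paper: it likewise observes that $M^{h}_{p}$ is a convex combination of $M^{h}_{1}$ (or $M^{h}_{\infty}$) and $M^{h}_{2}$ with weights determined by \eqref{q}, inherits monotonicity, stability, translation invariance and homogeneity from those operators, obtains the consistency condition \eqref{homo-consistency} from Lemmas \ref{taylor-mean}, \ref{taylor-median}, and \ref{taylor-midrange}, and then invokes the Barles--Souganidis framework \cite{barles:cas91} together with the comparison principle of \cite{giga:cpc91} to identify the limit as the unique bounded, continuous viscosity solution. Your additional attention to relaxed consistency at points where the test function's gradient vanishes is a detail the paper leaves implicit in its citation of \cite{barles:cas91}, but it does not alter the approach.
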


As remarked after the statement of Theorem \ref{chernoff-thm}, the infinite speed of propagation of the
heat equation follows directly from formula \eqref{chernoff}. The exact same argument shows that problem \eqref{homo-cauchy}
also exhibits an infinite speed of propagation as long as the coefficient $q$ is nonzero in the definition of $M^{h}_{p}$; 
$q \neq 0$ precisely when $1 < p < \infty$.  When $p = 1$, the solution of the mean curvature equation \eqref{mc-flow} corresponding to
a nonnegative compactly supported initial value will vanish in finite time, a property known as finite extinction; this fact
can be verified from formula \eqref{mc-soln} by analyzing what happens with each application of the median operator $M^{h}_{1}$.
When $p = \infty$, the infinite speed of propagation of equation \eqref{infty-lap-cauchy} follows from formula \eqref{infty-lap-soln} 
since each application of the midrange operator $M^{h}_{\infty}$ will enlarge the support of a compactly supported function 
by an amount $\sqrt{2h}$ in the direction of steepest descent; this reflects the one-dimensional (and thus highly degenerate) nature
of the infinity-Laplacian, something that is carefully explored and exploited in \cite{akagi:abv09}.

The relationship between the $1$-homogeneous $p$-Laplacian $\Delta^{1}_{p}$ (also known as 
the normalized or game-theoretic $p$-Laplacian, cf. \cite{kawohl:vpl11}, \cite{manfredi:amv10}, \cite{peres:tow08})
and the classical $p$-Laplacian is fairly well-known by now, but we review it briefly for completeness. For $p \in (1,\infty)$, 
the classical $p$-Laplacian $\Delta_{p}$ is the operator defined formally by 
\[
\Delta_{p} \varphi ~ := ~ \mbox{div}\left( \, |D \varphi |^{p-2} D \varphi \, \right) \ .
\]
This operator arises naturally in variational problems (\cite{heinonen:npt93}, \cite{lindqvist:npl06}), but more 
recent work (\cite{julin:npe12}, \cite{juutinen:evs01}) has shown that it is also amenable to viscosity methods. 
With such methods in mind, we define a $p$-harmonic function to be a continuous viscosity solution $u$ of 
\begin{equation} \label{pharm}
-\Delta_{p} u ~ = ~ 0 \, .
\end{equation}
Formal calculations show that 
\begin{equation} \label{decomp1}
\Delta_{p} u ~ = ~ |Du|^{p-2} \left( \, \Delta u + (p-2) \Delta_{\infty} \, \right) \, ,
\end{equation}
an identity used in \cite{juutinen:evs01}
to prove that $u$ is $p$-harmonic if and only if 
\[
-\Delta u \, - \, (p-2) \Delta_{\infty} u ~ = ~ 0 
\]
in the viscosity sense.  We also find that 
\[
\Delta_{1}u ~ = ~ \Delta u - \Delta_{\infty}u \ , 
\]
from which we have the important alternative decomposition 
\begin{equation} \label{decomp2}
\Delta_{p} u ~ = ~ |Du|^{p-2} \left( \, (p-1) \Delta u \, + \, (2-p) \Delta_{1} u \, \right) \, .
\end{equation}
Based on the work in \cite{julin:npe12} and \cite{juutinen:evs01}, it follows that $u$ is $p$-harmonic if and only if $u$ is a viscosity
solution of 
\begin{equation} \label{norm-pharm}
(p-1) \Delta u \, + \, (2-p) \Delta_{1} u ~ = ~ 0 \, .
\end{equation}

These results motivate the definition of the $1$-homogeneous $p$-Laplacian given above, and we see that $\Delta^{1}_{p}$ excludes the 
gradient-dependent factor that makes the classical $p$-Laplacian $(p-1)$-homogeneous. Thanks to this difference in homogeneity, the 
elliptic and parabolic problems related to the $1$-homogeneous $p$-Laplacian seem easier to analyze than the corresponding
problems for the classical $p$-Laplacian. 

\section{Directions for future work} \label{future}

There are many open problems related to the exponential formulas proven earlier. Of these, this final section focuses on 
two particular directions in which we are working to develop these ideas further.

\subsection{Dirichlet boundary conditions}

By considering Cauchy problems exclusively so far, we have avoided complications caused by boundaries and boundary conditions. 
Dirichlet boundary conditions, in particular, pose significant challenges for nonlinear parabolic problems; at the very least,
one expects some geometric restrictions on the boundary of the domain to be necessary in order to obtain a solution that 
attains the prescribed boundary values continuously. 

In an attempt to attack such problems, we recall the approach we applied to stationary Dirichlet problems  
in \cite{hartenstine:sfe13}:
given $h > 0$, $p \in [1,\infty]$, and a domain $\Omega \subset \R^{N}$ with nonempty boundary $\partial \Omega$, 
define the statistical operator $M^{h}_{p} \colon BUC(\overline{\Omega}) \to BUC(\overline{\Omega})$ by
\[
\left(M^{h}_{p} \varphi \right)(x) := \varphi(x) \quad \mbox{for} \quad x \in \partial \Omega 
\]
and
\[
\left(M^{h}_{p} \varphi \right)(x) := \left\{ \begin{array}{c}
\displaystyle{ (1-q) \med_{ \partial B^{h}_{x} }{ \left\{ \, \varphi \, \right\} } + q \fint_{ \partial B^{h}_{x}}{ \varphi \, dy } } \, , \quad  1 \leq p \leq 2  \\
\\
\displaystyle{ ( 1 - q ) \midrange_{ \partial B^{h}_{x} }{ \left\{ \, \varphi \, \right\} } + q \fint_{ \partial B^{h}_{x}}{ \varphi \, dy  } } \, , 
\quad p \geq 2 
\end{array} \right.
\]
for $x \in \Omega$, 
where $q = q(p,N)$ is defined by \eqref{q} and the open balls $B^{h}_{x}$ are defined by 
\[
B^{h}_{x} := B(x, r^{h}(x)) \ , 
\]
with
\[
r^{h}(x) := \left\{ \begin{array}{c}
\sqrt{2h} \quad \mbox{if} \quad \mbox{dist}(x, \partial \Omega) \geq \sqrt{2h} \, , \\
\mbox{dist}(x, \partial \Omega) \quad \mbox{otherwise}.
\end{array} \right.
\]
Note that this reduces to the definition of $M^{h}_{p}$ in Section \ref{homo} if the boundary of $\Omega$ happens to be empty.

Since $M^{h}_{p}$ is a local operator, it still satisfies the consistency condition \eqref{homo-consistency}, as well as the monotonicity, stability, and
homogeneity properties that we have been using. We would therefore like to invoke the results of \cite{barles:cas91} yet again to conclude that
\begin{equation} \label{homo-dirichlet-soln}
u(t) ~ := ~ \lim_{n \to \infty}{ \left( M^{t/n}_{p} \right)^{n} u_{0} } 
\end{equation}
is the unique bounded, continuous viscosity solution on $ \overline{ \Omega } \times [0,\infty) $ of 
\begin{equation} \label{homo-dirichlet}
\left\{ \begin{array}{c}
\displaystyle{ u_{t} \ - \ c_{p,N} \, \Delta^{1}_{p} u ~ = ~ 0 \quad \mbox{for} \quad x \in \Omega \quad 
\mbox{and} \quad t > 0 \, , } \\
\\
\displaystyle{ u(x,t) ~ = ~ u_{0}(x) \quad \mbox{for} \quad x \in \partial \Omega \quad 
\mbox{and} \quad t > 0 \, , } \\
\\
u(x,0) ~ = ~ u_{0}(x) \quad \mbox{for} \quad x \in \overline{\Omega} \ ,
\end{array} \right.
\end{equation}
where $u_{0} \in BUC( \overline{\Omega} ) $, but this will simply not work for arbitrary $p$ and $\Omega$. 
When Dirichlet conditions are imposed, we lack an appropriate comparison principle in general, and our  
understanding of the interaction of $M^{h}_{p}$ with boundary conditions is far from complete. We 
handled similar issues in \cite{hartenstine:sfe13} by using stringent definitions of subsolutions and supersolutions; 
we then found such sub- and supersolutions by requiring $\partial \Omega$ to be strictly convex.
The basic problem is to determine conditions on $\Omega$ and $p$ that will guarantee the existence of a unique continuous 
viscosity solution of \eqref{homo-dirichlet}.

Such conditions are known for bounded domains and certain values of $p$. When $p = \infty$, for instance, Juutinen and Kawohl \cite{juutinen:egi06} proved
that problem \eqref{homo-dirichlet} has a unique, bounded viscosity solution for any bounded domain $\Omega$, a surprisingly general
result made possible by the extreme degeneracy of the infinity-Laplacian. When $p=2$, problem \eqref{homo-dirichlet} has a unique bounded
solution if barriers exist at each point on $\partial \Omega$ \cite{lieberman:sop96}. When $p=1$, Sternberg and Ziemer \cite{sternberg:gmc94}
proved that \eqref{homo-dirichlet} has a unique bounded solution as long as $\partial \Omega$ is strictly convex. Under this  
assumption, Ilmanen, Sternberg and Ziemer \cite{ilmanen:esg98} analyzed the asymptotic behavior of this solution, proving that the solution 
approaches the unique function of least gradient on $\Omega$ as $t \to \infty$ if the boundary data is $C^{2}$; if the boundary data is only 
continuous, however, then this asymptotic limit is merely $1$-harmonic. 
Since $1$-harmonic functions need not be unique, one naturally wonders which one is chosen as $t \to \infty$. One should compare these
results with Juutinen's work on limits of $p$-harmonic functions as $p \to 1$ \cite{juutinen:pha05}. 

What happens for other values of $p$ needs to be investigated more thoroughly. Also, the exponential formula \eqref{homo-dirichlet-soln} 
makes sense for any $u_{0} \in BUC( \overline{ \Omega} )$; how does the function $u(t)$ defined by \eqref{homo-dirichlet-soln} behave for various 
values of $p$ and various domains $\Omega$? For example, what are the properties of this function when $p=1$ and $\Omega$ does not have a
strictly convex boundary? In this case, one expects $u(t) \in BV( \Omega )$ for $t > 0 $; when and where do discontinuities develop? 
These are just a few of the questions that need to be explored.

\subsection{Homogeneous diffusion on metric measure spaces}

Instead of working on $\R^{N}$, suppose that we have a finite-dimensional metric space $( X, d )$ equipped with a Radon measure $\mu$ 
on spheres $\partial B(x,r)$, for $x \in X$ and $r \geq 0$.  Given $p \in [1,\infty]$ 
and $h > 0$, we can define the local statistical operator $M^{h}_{p} \colon BUC( X ) \to BUC( X )$ as before, since
\[
\med_{\partial B(x,\sqrt{2h}) }{ \left\{ \, \varphi \, \right\} } \ , \quad \fint_{\partial B(x,\sqrt{2h})}{ \varphi(s) \, d\mu } \ , \quad 
\mbox{and} \quad 
\midrange_{\partial B(x,\sqrt{2h})}{ \left\{ \, \varphi \, \right\} }
\]
all make sense in this setting. The flow defined by the exponential formula \eqref{homo-cauchy-soln} thus also makes sense, 
at least \textit{a priori}, in this setting; what are its properties? Even in the simplest case when $p=2$ and the metric measure space $(X, d, \mu)$
is a Riemannian manifold without boundary, when is \eqref{homo-cauchy-soln} a formula for the solution of the heat equation on 
$X$? It seems reasonable to suspect that this holds whenever $X$ is a harmonic manifold, but the answer to this 
elementary question does not appear to be readily available in the literature. Similarly, 
could \eqref{homo-cauchy-soln} be used to derive the heat kernel explicitly on non-Euclidean spaces? When $p=1$, does 
formula \eqref{homo-cauchy-soln} provide useful insights into mean curvature flow on non-Euclidean spaces?  The many  
questions related to these operators and this exponential formula when the underlying space is not $\R^{N}$ are all open 
and intriguing.

To generalize even further, we can also consider these questions for maps from the metric measure space $(X,d,\mu)$ into
a more general metric space $(Y, \delta)$ than $\R$. To that end, let $p \in [1,\infty]$ be given, and define the $p$-mean of the continuous function 
$\varphi \colon X \to Y$ over a compact connected set $E \subset X$ by
\[
m_{p}(\varphi,E) ~ := ~ \argmin_{m \in Y}{ \left\{ \ \int_{E}{ \left( \delta( \varphi(x), m ) \right)^{p} \, d\mu(x) } \ \right\} }  \ .
\]
What happens when we replace $M^{h}_{p}$ in \eqref{homo-cauchy-soln} with the nonlinear averaging operator
\[
\left( A^{h}_{p} \varphi \right)( x ) ~ := ~ m_{p}( \, \varphi, \, \partial B(x,\sqrt{2h}) \, ) \ ?
\]
There should be interesting connections between this operator, the resulting exponential formula, and semigroup methods 
for $p$-harmonic mappings between metric spaces (cf. \cite{ambrosio:gfm08}, \cite{mayer:gfn98}, \cite{sturm:sah05}).

\bibliographystyle{siam}

\end{document}